\documentclass[11pt]{amsart}
\usepackage[margin=1.4in]{geometry}

\usepackage[T1]{fontenc}
\usepackage{lmodern,microtype}
\usepackage{amsmath,amssymb,amsthm,mathtools}
\usepackage{xcolor}
\usepackage[colorlinks=true,linkcolor=blue!50!black,
  citecolor=green!35!black,urlcolor=blue!50!black]{hyperref}
\theoremstyle{plain}
\newtheorem{theorem}{Theorem}[section]
\newtheorem{proposition}[theorem]{Proposition}
\newtheorem{lemma}[theorem]{Lemma}
\newtheorem{corollary}[theorem]{Corollary}
\theoremstyle{definition}
\newtheorem{example}[theorem]{Example}
\newtheorem{remark}[theorem]{Remark}

\newcommand{\C}{\mathbb C}
\newcommand{\D}{\mathbb D}
\newcommand{\Nzero}{\mathbb N_0}
\newcommand{\Hh}{\mathcal H}
\newcommand{\M}{\mathcal M}
\newcommand{\A}{\mathcal A}
\newcommand{\Span}{\operatorname{span}}
\newcommand{\codim}{\operatorname{codim}}
\newcommand{\rank}{\operatorname{rank}}
\newcommand{\norm}[1]{\left\lVert#1\right\rVert}
\newcommand{\abs}[1]{\left\lvert#1\right\rvert}
\newcommand{\inner}[2]{\langle#1,#2\rangle}
\newcommand{\jury}{\diamond}

\title[Finite-orbit obstructions on the polydisk]
{Finite-Orbit Obstructions for Multipliers on the Polydisk}
\author[I.~Krishtal]{Ilya Krishtal}
\address{School of Mathematical and Statistical Sciences,
Northern Illinois University, DeKalb, IL 60115, USA}
\email{ikrishtal@niu.edu}
\urladdr{https://orcid.org/0000-0001-7171-2177}
\author[J.~Mashreghi]{Javad Mashreghi}
\address{D\'epartement de math\'ematiques et de statistique,
Universit\'e Laval, Qu\'ebec, QC G1V 0K6, Canada}
\email{javad.mashreghi@ulaval.ca}
\urladdr{https://orcid.org/0000-0002-7969-9576}

\author[P.~K.~Vishwakarma]{Prateek Kumar Vishwakarma}
\address{D\'epartement de math\'ematiques et de statistique,
Universit\'e Laval, Qu\'ebec, QC G1V 0K6, Canada}
\email{prateek-kumar.vishwakarma.1@ulaval.ca}
\urladdr{https://orcid.org/0000-0001-7502-1381}
\thanks{Corresponding author: Ilya Krishtal
(\href{mailto:ikrishtal@niu.edu}{ikrishtal@niu.edu}).}
\date{September 2026}
\subjclass[2020]{Primary 47B35, 30H10; Secondary 42C15, 15A16, 46E22}
\keywords{Hardy space on the polydisk, multiplication operator,
dynamical frame, finite cyclicity, Jury product, matrix convolution}
\hypersetup{pdftitle={Finite-Orbit Obstructions for Multipliers on the Polydisk},
pdfauthor={Ilya Krishtal, Javad Mashreghi, Prateek Kumar Vishwakarma}}

\begin{document}
\raggedbottom
\begin{abstract}
We study the closed spans of finitely many joint orbits of holomorphic
multipliers on the Hardy space $H^2(\D^d)$ of the polydisk. If fewer
than $d$ symbols are used, every such span has infinite codimension.
The symbols may be unbounded, provided that all orbit vectors belong
to the Hardy space. We give two elementary proofs. The first uses
common level sets and independent point evaluations; for bounded
symbols, these evaluations yield joint adjoint eigenvectors. The
second uses finite Taylor sections and the nilpotent structure of
truncated convolution, also known as the Jury product. For $r$
generators and $q<d$ symbols, the orbit dimension on a cube of side
$n$ is $O(n^q)$, whereas the ambient dimension is $n^d$. In the bidisk
we obtain the explicit codimension bound $MN-r(M+N-1)$, which is
sharp for a single orbit. The coordinate multipliers attain the
parameter threshold. We also explain how these obstructions relate
to kernel interpolation and the established representation of
dynamical frames by compressed shifts on model spaces.
\end{abstract}
\maketitle

\section{Introduction}

A familiar question in dynamical sampling asks whether repeated
application of an operator can compensate for having only a few
initial vectors. Given a bounded operator $T$ on a Hilbert space
$\Hh$, one studies families
\begin{equation}\label{eq:orbit}
   \{T^n f_j:n\in\Nzero,\ 1\leq j\leq r\}.
\end{equation}
Such a family is a frame if there are constants $0<A\leq B<\infty$
such that
\[
 A\norm{h}^2\leq
 \sum_{j=1}^r\sum_{n\geq0}\abs{\inner{h}{T^nf_j}}^2
 \leq B\norm{h}^2\qquad(h\in\Hh).
\]
For background on frames and Riesz sequences, see \cite{Christensen}.
Completeness is the weaker requirement that the closed linear span
be all of $\Hh$. An operator admitting a complete family
\eqref{eq:orbit} with finitely many initial vectors is called
\emph{finitely multicyclic}; see \cite{Trivedi} for this notion and
its extension to commuting tuples.

On the one-variable Hardy space, multiplication by $z$ supplies the
simplest example: the orbit of the constant function $1$ is the
monomial orthonormal basis. The situation changes for scalar
multipliers on a polydisk. Aguilera and Carando
\cite[Proposition~2.5]{AguileraCarando} proved that if $d\geq2$ and
$\varphi\in H^\infty(\D^d)$, no finite family of orbits of
$M_\varphi f=\varphi f$ is a frame for $H^2(\D^d)$. We show that
the obstruction occurs already at the level of completeness:
\begin{equation}\label{eq:scalar-result}
 \codim\overline{\Span}
 \{\varphi^nf_j:n\in\Nzero,\ 1\leq j\leq r\}=\infty
 \qquad(d\geq2).
\end{equation}
There is no assumption on the norms of the iterates or on possible
frame bounds. In the bounded scalar case, \eqref{eq:scalar-result}
also follows from the classical Berger--Shaw theorem
\cite{BergerShaw}; we record this consequence in
Remark~\ref{rem:berger-shaw}. Our emphasis is on elementary proofs,
joint holomorphic symbols, and explicit estimates for finite
Taylor sections.

The two arguments describe the obstruction in complementary ways.
The first takes place in the domain of the functions. A scalar
holomorphic function of several variables has a level set containing
infinitely many points. Evaluation at those points gives many
eigenvectors for the adjoint multiplier, and finitely many initial
vectors cannot account for all of them. The second argument takes
place among Taylor coefficients. On a finite rectangle,
multiplication becomes truncated convolution. After subtracting its
constant term, the multiplier becomes nilpotent. Consequently, one
orbit explores only a small part of a sufficiently large rectangle.

The latter viewpoint connects the problem with the Jury product.
Jury introduced a matrix-product framework in his work on
interpolation problems \cite{JuryThesis}. The recent study of
matrix convolution by Mashreghi, Nasri, and Vishwakarma
\cite{MashreghiNasriVishwakarma} includes the Cayley--Hamilton identity
that governs our finite sections. We explain this connection in
Section~\ref{sec:sections}, including the exact number of initial
matrices needed to generate a finite rectangle. For $r$ scalar
orbits, the resulting codimension estimate on an $n\times n$ section
is $n^2-r(2n-1)$; equality occurs for one orbit of $z_1+z_2$.

Both arguments apply to joint orbits of $q<d$ holomorphic symbols,
including unbounded ones when all orbit vectors belong to $H^2$;
see Corollary~\ref{cor:unbounded}. On cubic Taylor sections of side
$n$, the orbit dimension is $O(n^q)$ inside an ambient space of
dimension $n^d$. The $d$ coordinate multipliers attain the parameter
threshold. Section~\ref{sec:kernels} recalls kernel interpolation
and compressed-shift models of dynamical frames, placing the
obstruction in the context of optimal generators and sectorial
temporal sampling.

\section{Level sets and adjoint eigenvectors}\label{sec:fibers}

We use the coefficient description of the Hardy space
\cite{RudinPolydiscs}:
\[
 H^2(\D^d)=\left\{f(z)=\sum_{\alpha\in\Nzero^d}a_\alpha z^\alpha:
      \norm{f}^2=\sum_\alpha\abs{a_\alpha}^2<\infty\right\}.
\]
Here $z^\alpha=z_1^{\alpha_1}\cdots z_d^{\alpha_d}$ and
$\abs{\alpha}=\alpha_1+\cdots+\alpha_d$. Its reproducing kernel at
$w\in\D^d$ is
\begin{equation}\label{eq:kernel}
 K_w(z)=\prod_{\ell=1}^d\frac{1}{1-z_\ell\overline{w_\ell}},
 \qquad
 \norm{K_w}^2=\prod_{\ell=1}^d\frac{1}{1-\abs{w_\ell}^2}.
\end{equation}
For $\varphi\in H^\infty(\D^d)$, the reproducing property gives
$M_\varphi^*K_w=\overline{\varphi(w)}K_w$.

Kernels at distinct points are linearly independent. For distinct
$w_1,\ldots,w_s$, products of suitable linear coordinate factors
give polynomials $p_j$ with $p_j(w_k)=\delta_{jk}$. Pairing a
relation among the kernels with these polynomials makes every
coefficient vanish. Thus infinitely many points yield infinitely
many independent kernels, even inside a small neighborhood.

The relevant operator observation applies to several commuting
operators at once. For $T=(T_1,\ldots,T_q)$, write
$T^\alpha=T_1^{\alpha_1}\cdots T_q^{\alpha_q}$.

\begin{lemma}\label{lem:joint-eigenspace}
Let $T_1,\ldots,T_q$ be commuting bounded operators on $\Hh$, and
let $f_1,\ldots,f_r\in\Hh$. For $\lambda\in\C^q$, put
\[
 E_\lambda=\bigcap_{\ell=1}^q
          \ker(T_\ell^*-\overline{\lambda_\ell}I).
\]
Then
\[
 E_\lambda\cap\Span\{f_1,\ldots,f_r\}^{\perp}
 \subseteq
 \left(\overline{\Span}\{T^\alpha f_j:
              \alpha\in\Nzero^q,\ 1\leq j\leq r\}\right)^\perp.
\]
If $E_\lambda$ is infinite-dimensional, the orbit span has infinite
codimension.
\end{lemma}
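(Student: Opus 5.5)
The proof is a direct computation of inner products, using the adjoint eigenvector relation to move powers of $T$ off the orbit vectors, followed by a dimension count.

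\textbf{The inclusion.} Take $h\in E_\lambda$ with $h\perp f_j$ for all $j$.

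1. Since $T_\ell^*h=\overline{\lambda_\ell}h$ for each $\ell$, induction gives $(T^\alpha)^*h=\overline{\lambda^\alpha}\,h$, where $\lambda^\alpha=\lambda_1^{\alpha_1}\cdots\lambda_q^{\alpha_q}$. We have $(T^\alpha)^*=(T_q^*)^{\alpha_q}\cdots(T_1^*)^{\alpha_1}$, and applying each factor to $h$ multiplies it by a scalar. Commutativity is used only so that $T^\alpha$ is unambiguously defined.

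2. For every $\alpha\in\Nzero^q$ and every $j$,
\[
\inner{T^\alpha f_j}{h}=\inner{f_j}{(T^\alpha)^*h}=\lambda^\alpha\inner{f_j}{h}=0.
\]

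3. Thus $h$ is orthogonal to every orbit vector. By linearity and continuity of the inner product, $h$ is orthogonal to their closed span, which is the claimed inclusion.

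\textbf{Infinite codimension.} Suppose $E_\lambda$ is infinite-dimensional.

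1. Note that $\Span\{f_1,\ldots,f_r\}^\perp$ is a closed subspace of codimension at most $r$.

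2. Its intersection with $E_\lambda$ has codimension at most $r$ inside $E_\lambda$. This is the kernel of the linear map $E_\lambda\to\C^r$ given by $h\mapsto(\inner{h}{f_j})_j$. Hence the intersection is infinite-dimensional.

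3. By the inclusion, the orthogonal complement of the orbit span contains this infinite-dimensional subspace.

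4. The codimension of a closed subspace equals the dimension of its orthogonal complement, so the codimension is infinite.

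No step is a real obstacle. The only points needing care are that the conjugates match up when $(T^\alpha)^*$ is moved across the inner product, and that the codimension count in step 2 is finite.
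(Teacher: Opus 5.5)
Your proof is correct and matches the paper's argument: you move $T^\alpha$ across the inner product, use that $(T^\alpha)^*h=\overline{\lambda^\alpha}h$ is a scalar multiple of $h$ to get the inclusion, and then observe that the kernel of $h\mapsto(\inner{h}{f_j})_j$ on $E_\lambda$ loses at most $r$ dimensions. That is the paper's ``imposing $r$ orthogonality conditions'' count, made explicit.
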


\begin{proof}
If $h$ belongs to the intersection on the left, then
\[
 \inner{T^\alpha f_j}{h}=\inner{f_j}{(T^\alpha)^*h}=0,
\]
since the vector $(T^\alpha)^*h$ is a scalar multiple of $h$.
Imposing $r$ orthogonality conditions removes at most $r$ dimensions
from $E_\lambda$.
\end{proof}

The lemma says that many eigenvectors for one common eigenvalue
require many independent initial vectors. For multipliers, common
level sets provide precisely such eigenvectors. We state the result
for a tuple of symbols, so that the scalar case and the parameter
threshold can be treated together.

\begin{theorem}\label{thm:main}
Let $1\leq q<d$, let
$\Phi=(\varphi_1,\ldots,\varphi_q)\in H^\infty(\D^d)^q$, and let
$F=(f_1,\ldots,f_r)\in H^2(\D^d)^r$. Set
\[
 \M_\Phi(F)=\overline{\Span}\{
   \varphi_1^{\alpha_1}\cdots\varphi_q^{\alpha_q}f_j:
          \alpha\in\Nzero^q,\ 1\leq j\leq r\}.
\]
Then $\codim\M_\Phi(F)=\infty$.
\end{theorem}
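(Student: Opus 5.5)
We apply Lemma~\ref{lem:joint-eigenspace} to the commuting tuple $T=(M_{\varphi_1},\ldots,M_{\varphi_q})$ on $H^2(\D^d)$. Since each $\varphi_\ell$ is bounded, each $M_{\varphi_\ell}$ is bounded, and the $M_{\varphi_\ell}$ commute. Moreover $T^\alpha f_j=\varphi_1^{\alpha_1}\cdots\varphi_q^{\alpha_q}f_j$, so the orbit span in the lemma is exactly $\M_\Phi(F)$. By the lemma, it suffices to find one $\lambda\in\C^q$ for which the joint eigenspace $E_\lambda$ is infinite-dimensional.

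To produce $\lambda$, fix any $w_0\in\D^d$ and set $\lambda=\Phi(w_0)$. Consider the common level set
\[
 Z=\{w\in\D^d:\Phi(w)=\lambda\}.
\]
For every $w\in Z$ we have $M_{\varphi_\ell}^*K_w=\overline{\varphi_\ell(w)}K_w=\overline{\lambda_\ell}K_w$ for each $\ell$, so $K_w\in E_\lambda$. Since kernels at distinct points are linearly independent, as noted before the lemma, it is enough to show that $Z$ is infinite.

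The key step is the claim that $Z$ is infinite, and this is where $q<d$ enters. The map $\Phi-\lambda$ is holomorphic from $\D^d$ to $\C^q$ and vanishes at $w_0$. Its zero set is an analytic subset of $\D^d$ cut out by $q$ equations, so every irreducible component through $w_0$ has dimension at least $d-q\geq1$. In particular $Z$ contains a positive-dimensional analytic set through $w_0$ and hence infinitely many points. This is the standard dimension estimate for zero sets of holomorphic maps. The elementary alternative would be to proceed by induction on $q$. For $q=1$, restrict $\varphi_1-\lambda_1$ to complex lines through $w_0$ and use Weierstrass preparation, or equivalently the fact that a holomorphic function of $d\geq2$ variables has no isolated zeros, to obtain a zero set of dimension $d-1$. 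One then intersects successively with the remaining level sets.

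Given $Z$ infinite, $E_\lambda$ contains infinitely many linearly independent kernels $K_w$, $w\in Z$, so it is infinite-dimensional. Lemma~\ref{lem:joint-eigenspace} then gives $\codim\M_\Phi(F)=\infty$. I expect the only real obstacle to be justifying cleanly that the common level set is infinite. I would cite the dimension theorem for analytic sets (that zero sets of $q$ holomorphic functions have all components of codimension at most $q$), rather than reprove it.
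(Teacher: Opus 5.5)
Your proof is correct and has the same structure as the paper's. You apply Lemma~\ref{lem:joint-eigenspace} to $(M_{\varphi_1},\ldots,M_{\varphi_q})$ and fill $E_\lambda$ with independent reproducing kernels at the points of an infinite common level set of $\Phi$. The two arguments differ only in how that level set is produced, and this is where the comparison is worth making.

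You fix an arbitrary $w_0$ and cite the dimension theorem for analytic sets: every component of $\{\Phi=\Phi(w_0)\}$ through $w_0$ has dimension at least $d-q\geq1$. The paper instead moves to a point $a$ where $\rank D\Phi$ attains its maximum $k\leq q$. The rank is locally constant there, so the constant-rank theorem shows that the fiber through $a$ is a complex manifold of dimension $d-k\geq d-q>0$. That theorem is just the inverse function theorem applied to $z\mapsto(\varphi_1,\ldots,\varphi_k,z_{k+1},\ldots,z_d)$.

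Your route gives a stronger conclusion: every nonempty fiber is infinite, so $E_\lambda$ is infinite-dimensional for every $\lambda\in\Phi(\D^d)$, not only for values taken near points of maximal rank. The cost is a genuinely nontrivial theorem of local analytic geometry. The paper's route is self-contained, in keeping with its emphasis on elementary proofs, and it needs only one infinite fiber.

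Your sketched ``elementary alternative'' by induction on $q$ is not elementary as written. After the first step, $\{\varphi_1=\lambda_1\}$ may be singular at $w_0$. Showing that $\varphi_2-\lambda_2$ has no isolated zeros on such a set already requires local parametrization of analytic sets. The paper's move to a point of maximal rank is exactly what avoids singular points. If you want to avoid citing dimension theory, let $\lambda$ vary and adopt that device rather than insisting on a fiber through a prescribed $w_0$.
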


\begin{proof}
We first find an infinite common level set of $\Phi$. Let
\[
 k=\max_{z\in\D^d}\rank D\Phi(z)
\]
and choose a point $a$ where this rank is attained. If $k=0$, all the symbols are constant, and
the whole polydisk is a common level set. If $k>0$, a nonzero
$k\times k$ minor remains nonzero near $a$, so the rank is constant
there. The holomorphic constant-rank theorem gives a local level set
of complex dimension $d-k\geq d-q>0$.

For clarity, the local-coordinate argument behind this last step is
as follows. After relabeling components and coordinates, the map
\[
 z\mapsto(\varphi_1(z),\ldots,\varphi_k(z),z_{k+1},\ldots,z_d)
\]
is locally invertible. In these coordinates, the remaining components
of $\Phi$ depend only on the first $k$ coordinates, since otherwise
the rank would exceed $k$. Fixing those coordinates therefore leaves
$d-k$ free complex coordinates in the common level set.

Put $\lambda=\Phi(a)$ and let $V$ be the infinite level set just
obtained. By \eqref{eq:kernel},
\[
 \Span\{K_w:w\in V\}
 \subseteq\bigcap_{\ell=1}^q
       \ker(M_{\varphi_\ell}^*-\overline{\lambda_\ell}I).
\]
The kernels are independent, so this common eigenspace is
infinite-dimensional. The result follows from
Lemma~\ref{lem:joint-eigenspace}.
\end{proof}

The level-set argument concerns values of functions and does not
require a bounded multiplication operator. To make this explicit,
let $\Phi=(\varphi_1,\ldots,\varphi_q)$ be a tuple of holomorphic
functions and put
\[
 \begin{gathered}
 \mathcal D_\infty(\Phi)
 =\{f\in H^2(\D^d):\Phi^\alpha f\in H^2(\D^d)
                         \text{ for every }\alpha\in\Nzero^q\},\\
 \Phi^\alpha=\prod_{\ell=1}^q\varphi_\ell^{\alpha_\ell}.
 \end{gathered}
\]
This is the common domain on which all joint iterates are defined.

\begin{corollary}\label{cor:unbounded}
Let $1\leq q<d$, let the components of $\Phi$ be holomorphic on
$\D^d$, and let $f_1,\ldots,f_r\in\mathcal D_\infty(\Phi)$.
Then $\codim\M_\Phi(F)=\infty$, where the closure defining
$\M_\Phi(F)$ is taken in the $H^2$ norm.
\end{corollary}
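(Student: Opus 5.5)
Since $M_{\varphi_\ell}$ may now be unbounded, Lemma~\ref{lem:joint-eigenspace} cannot be applied through adjoints. I will instead verify directly that the kernel vectors on a common level set are orthogonal to the orbit span. The level-set construction in the proof of Theorem~\ref{thm:main} never used boundedness, only holomorphy. Setting $k=\max\rank D\Phi$, a point $a$ of maximal rank, and the holomorphic constant-rank theorem, it produces an infinite set $V\subseteq\D^d$ with $\Phi(w)=\lambda:=\Phi(a)$ for all $w\in V$. The case $k=0$ is handled as before. So this step is simply reused.

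The key computation is the following. For $h=\sum_{i}c_iK_{w_i}$, a finite combination of kernels at distinct points $w_i\in V$, and any $f\in\mathcal D_\infty(\Phi)$, the reproducing property applies to $\Phi^\alpha f\in H^2(\D^d)$ and gives
\[
 \inner{\Phi^\alpha f}{h}=\sum_i\overline{c_i}\,\Phi^\alpha(w_i)f(w_i)
 =\lambda^\alpha\sum_i\overline{c_i}f(w_i)=\lambda^\alpha\inner{f}{h}.
\]
This is exactly the eigenvector identity used in the lemma, now obtained pointwise with no operator needed. Consequently, if $h\perp f_j$ for $1\leq j\leq r$, then $h\perp\Phi^\alpha f_j$ for every $\alpha$ and $j$. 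By continuity of the inner product, $h$ is then orthogonal to the $H^2$-closure $\M_\Phi(F)$.

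It remains to count dimensions. Pick $s$ distinct points of $V$. Their kernels span an $s$-dimensional space $W_s$, since kernels at distinct points are independent, as shown before Lemma~\ref{lem:joint-eigenspace}. The $r$ linear conditions $\inner{f_j}{h}=0$ cut out a subspace of $W_s$ of dimension at least $s-r$, and this subspace lies in $\M_\Phi(F)^\perp$. Since $s$ is arbitrary, $\M_\Phi(F)^\perp$ is infinite-dimensional, so $\codim\M_\Phi(F)=\infty$.

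The main point of care, rather than a real obstacle, is that the orthogonality argument must use only the reproducing identity $\inner{g}{K_w}=g(w)$ for functions $g$ already known to lie in $H^2$. We must not assume that $K_w$ lies in the domain of an adjoint. This is precisely why the hypothesis $f_j\in\mathcal D_\infty(\Phi)$ is needed.
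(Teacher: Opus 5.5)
Your proof is correct. It is essentially the paper's argument in dual form. The paper takes the bounded evaluation map $R_N f=(f(w_1),\ldots,f(w_N))$ on an infinite fiber, uses $R_N(\Phi^\alpha f_j)=\lambda^\alpha R_N f_j$, and counts codimension through the induced quotient map onto $\C^N/\Span\{R_Nf_j\}$; you instead work with the kernels $K_{w_i}=R_N^*e_i$ and orthogonal complements, so both arguments use the same infinite common fiber and the same independence of point evaluations to get the bound $N-r$.
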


\begin{proof}
The common-level-set construction in Theorem~\ref{thm:main} uses
only holomorphy. Choose distinct $w_1,\ldots,w_N$ in an infinite
fiber $\Phi^{-1}(\lambda)$. Polynomial interpolation shows that
the bounded evaluation map
\[
 R_Nf=(f(w_1),\ldots,f(w_N))
\]
maps $H^2$ onto $\C^N$.
Since $R_N(\Phi^\alpha f_j)=\lambda^\alpha R_Nf_j$, continuity
gives
\[
 R_N\M_\Phi(F)\subseteq\Span\{R_Nf_1,\ldots,R_Nf_r\}.
\]
The induced map from $H^2/\M_\Phi(F)$ onto
$\C^N/\Span\{R_Nf_j:1\leq j\leq r\}$ yields codimension at
least $N-r$. The integer $N$ is arbitrary.
\end{proof}

For an unbounded symbol, multiplication has the natural domain
$\mathcal D(M_\varphi)=\{f\in H^2:\varphi f\in H^2\}$.
The corollary makes no density assumption on this domain and does
not use an unbounded adjoint. Jury and Martin \cite{JuryMartin}
study densely defined multiplication operators and their domains
in complete Pick spaces; the argument above uses independent
point evaluations on an infinite fiber. As a simple unbounded
example, take $\varphi(z)=-\log(1-z_1)$. Every power of this
function belongs to $H^2$, since its boundary singularity is
logarithmic. Hence every polynomial lies in
$\mathcal D_\infty(\varphi)$, so this common domain is dense.

\begin{example}\label{ex:threshold}
The number of parameters in Theorem~\ref{thm:main} is sharp.
The coordinate multipliers $M_{z_1},\ldots,M_{z_d}$ have the joint
orbit $\{z^\alpha:\alpha\in\Nzero^d\}$ of $1$, which is an
orthonormal basis. If only the first $q<d$ coordinates are used,
the orbit of $1$ consists of functions independent of the remaining
coordinates. Theorem~\ref{thm:main} shows that replacing the
coordinates by arbitrary bounded holomorphic symbols, and allowing
finitely many initial functions, cannot remove this deficit.
\end{example}

For a single multiplier, the theorem gives \eqref{eq:scalar-result}
and rules out similarity to any finitely multicyclic operator:
an invertible intertwiner transports complete orbit families.
This includes finite direct sums of the block-diagonal Carleson
operators of Krishtal and Miller
\cite[Theorem~1.2]{KrishtalMillerBDC}, which have single generating
orbits. For nonconstant $\varphi$, this particular exclusion also
follows from $(\varphi-\lambda)f=0\Rightarrow f=0$, whereas
Jordan blocks have eigenvectors. That simpler observation applies
even in one variable.

\begin{remark}\label{rem:berger-shaw}
The bounded scalar assertion \eqref{eq:scalar-result} also follows
from the Berger--Shaw theorem \cite{BergerShaw}: a finitely
multicyclic hyponormal operator has a trace-class self-commutator.
The multiplier $M_\varphi$ is subnormal, being the restriction of
normal multiplication on $L^2(\mathbb T^d)$ to the invariant
Hardy space, and hence is hyponormal. Write
$\varphi(z)=\sum_\alpha a_\alpha z^\alpha$. On the monomial
orthonormal basis,
\[
 \inner{(M_\varphi^*M_\varphi-M_\varphi M_\varphi^*)z^\beta}
       {z^\beta}
 =\norm{\varphi}^2-\sum_{\alpha\leq\beta}\abs{a_\alpha}^2
 =\sum_{\alpha\not\leq\beta}\abs{a_\alpha}^2,
\]
where $\alpha\leq\beta$ means coordinatewise inequality.
If $\varphi$ is nonconstant, choose $a_\gamma\ne0$ with
$\gamma_i>0$, and choose $j\ne i$. The diagonal entries on
$1,z_j,z_j^2,\ldots$ are all at least $\abs{a_\gamma}^2>0$.
Thus the self-commutator is not compact, which excludes finite
multicyclicity. If a finite family of orbits had a closed span of finite
codimension, adding a basis of its orthogonal complement to the
initial vectors would give a finite multicyclic family. Constant
symbols have finite-dimensional orbit spans directly.
\end{remark}

\section{The Jury product and finite Taylor sections}\label{sec:sections}

We now follow the coefficients rather than the values of the
functions. The advantage is that the obstruction becomes a count of
vectors in a finite-dimensional space. We begin with the bidisk,
where the coefficients can be arranged in a matrix.

For $A=(a_{ij}),B=(b_{ij})\in\C^{M\times N}$, indexed from zero,
define their truncated convolution by
\begin{equation}\label{eq:jury}
 (A\jury B)_{ij}
 =\sum_{p=0}^i\sum_{s=0}^j a_{ps}b_{i-p,j-s},
 \qquad 0\leq i<M,\quad 0\leq j<N.
\end{equation}
This is the \emph{Jury product}. The name recalls Jury's treatment
of Carath\'eodory--Fej\'er and Nevanlinna--Pick interpolation through
matrix products and positive-matrix extension problems
\cite{JuryThesis}. In particular, this convolution preserves positive
semidefiniteness for square matrices. Mashreghi, Nasri, and
Vishwakarma \cite{MashreghiNasriVishwakarma} developed its functional
calculus, positivity properties, and Cayley--Hamilton theory.
Related inequalities for causal and generalized operator products
were obtained by Guillot, Mashreghi, and Vishwakarma
\cite{GuillotMashreghiVishwakarmaOppenheim,GuillotMashreghiVishwakarmaSharp};
a broader completely positive framework is studied by Evert, Jury,
and McCullough \cite{EvertJuryMcCullough}. For our orbit problem,
the useful feature is the algebraic structure of convolution.

Associate a coefficient matrix with its polynomial
$\sum_{i<M,j<N}a_{ij}x^iy^j$. Formula~\eqref{eq:jury} then identifies
matrix convolution with multiplication in
\begin{equation}\label{eq:algebra}
 \A_{M,N}=\C[x,y]/(x^M,y^N).
\end{equation}
Terms crossing the boundary of the rectangle are discarded.
Thus the product is associative and commutative, with identity
corresponding to the matrix whose only nonzero entry is $a_{00}=1$.
We use this polynomial identification below and write products in
$\A_{M,N}$ by juxtaposition.

Write $T=c1+U$, where $c$ is the constant coefficient and $U$ has
zero constant coefficient. Every monomial of $U$ has positive total
degree. Any monomial of degree at least $M+N-1$ is divisible by
$x^M$ or $y^N$, so
\begin{equation}\label{eq:nilpotence}
 U^{M+N-1}=0.
\end{equation}
This is the Cayley--Hamilton identity of
\cite[Theorem~F]{MashreghiNasriVishwakarma}, expressed in the
polynomial model. Denote the smallest positive integer $\nu$ with
$U^\nu=0$ by $\nu(U)$; in particular, $\nu(0)=1$.
Since $T=c1+U$ and $U=T-c1$, polynomials in $T$ and in $U$ have
the same span. The orbit of any initial matrix $B$ is therefore
spanned by $B,UB,\ldots,U^{\nu(U)-1}B$.

Before passing back to the Hardy space, let us record the standard
module-theoretic criterion for the number of initial matrices, or
\emph{probes}, needed by the finite model.
Modulo $U\A_{M,N}$, the evolution $T$ acts as the scalar $c$.
An orbit contributes only one direction to this quotient, and this
simple necessary condition is also sufficient.

\begin{proposition}\label{prop:probes}
Let $T=c1+U\in\A_{M,N}$ with $U$ of zero constant coefficient.
The orbits of $B_1,\ldots,B_r$ span $\A_{M,N}$ if and only if their
residue classes span $\A_{M,N}/U\A_{M,N}$. Hence the minimum
number of probes is
\[
 \mu(U)=\dim_\C(\A_{M,N}/U\A_{M,N}),
 \qquad
 \mu(U)\geq\left\lceil\frac{MN}{\nu(U)}\right\rceil.
\]
\end{proposition}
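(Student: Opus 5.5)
The plan is to treat $\A=\A_{M,N}$ as a finite-dimensional local commutative algebra and the orbit span as a cyclic submodule over the subalgebra generated by $U$. By the remark preceding the proposition, the orbit of $B$ under $T$ is spanned by $U^kB$ for $0\le k<\nu(U)$. Hence the joint orbit span of $B_1,\ldots,B_r$ equals the $\C[U]$-submodule
\[
 W=\sum_{j=1}^r\C[U]B_j=\Span\{U^kB_j:0\le k<\nu(U),\ 1\le j\le r\}.
\]
The question therefore becomes when $W=\A$.

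\emph{Necessity.} If $W=\A$, then reducing modulo $U\A$ kills every term $U^kB_j$ with $k\ge1$. So the images of $B_1,\ldots,B_r$ alone span $\A/U\A$.

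\emph{Sufficiency.} This is the main step, a Nakayama-type argument that uses nilpotence instead of a local-ring lemma. Suppose $\A=\Span\{B_j\}+U\A$. I will substitute this identity into itself repeatedly. Since $U\A=U\Span\{B_j\}+U^2\A$, induction gives
\[
 \A=\Span\{U^kB_j:0\le k<m,\ 1\le j\le r\}+U^m\A\qquad\text{for every } m\ge1.
\]
Taking $m=\nu(U)$ makes $U^m\A=0$, by the nilpotence \eqref{eq:nilpotence} and the definition of $\nu(U)$. Hence $W=\A$. The step to check carefully is that $U$ times a span of elements is the span of $U$ times those elements. This holds because multiplication by $U$ is linear and the algebra is commutative.

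\emph{The formula for $\mu(U)$.} By the criterion just proved, $r$ probes suffice exactly when $r\ge\dim(\A/U\A)$. Indeed, one can choose $B_j$ lifting a basis of the quotient, and fewer than $\dim(\A/U\A)$ classes cannot span it. So $\mu(U)=\dim_\C(\A/U\A)$.

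\emph{The lower bound.} I will compare dimensions using the filtration
\[
 \A\supseteq U\A\supseteq U^2\A\supseteq\cdots\supseteq U^{\nu}\A=0,\qquad \nu=\nu(U).
\]
Multiplication by $U^k$ induces a surjection $\A/U\A\to U^k\A/U^{k+1}\A$. Each successive quotient therefore has dimension at most $\mu(U)$. There are $\nu$ such quotients, so summing gives $MN=\dim\A\le\nu(U)\mu(U)$, and hence $\mu(U)\ge\lceil MN/\nu(U)\rceil$.

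The degenerate case $U=0$ (so $\nu=1$) fits the same argument: $\mu=MN$ and the bound is equality. I expect no real obstacle; the only delicate point is the iterated substitution in the sufficiency step.
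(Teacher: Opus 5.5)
Your proof is correct and follows the paper's argument: necessity by reducing modulo $U\A_{M,N}$, and sufficiency by iterating $\A_{M,N}=\mathcal S+U\A_{M,N}$ until $U^{\nu(U)}$ kills the remainder. The only difference is in the final inequality: the paper notes that $\mu(U)$ generating orbits each have dimension at most $\nu(U)$, while your surjections $\A_{M,N}/U\A_{M,N}\to U^k\A_{M,N}/U^{k+1}\A_{M,N}$ along the $U$-adic filtration give the same bound $MN\leq\mu(U)\nu(U)$ without using the sufficiency direction.
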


\begin{proof}
Let $\mathcal S=\Span\{U^kB_j:k\geq0,\ 1\leq j\leq r\}$,
which is also the span of the $T$-orbits. Necessity follows on
passing to the quotient. Conversely, if the residue classes span,
then $\A_{M,N}=\mathcal S+U\A_{M,N}$. The invariance
$U\mathcal S\subseteq\mathcal S$ allows us to iterate this identity:
\[
 \A_{M,N}=\mathcal S+U^k\A_{M,N}\qquad(k\geq1).
\]
For $k=\nu(U)$, the remainder vanishes. Thus representatives of a
quotient basis give exactly $\mu(U)$ probes. Each orbit has dimension
at most $\nu(U)$, so $MN\leq\mu(U)\nu(U)$.
\end{proof}

This is the nilpotent form of Nakayama's lemma \cite[Proposition~2.6]{AtiyahMacdonald},
applied to
$\A_{M,N}$ as a module over $\C[t]/(t^{\nu(U)})$, with $t$
acting by multiplication by $U$. Equivalently, $\mu(U)$ counts the
Jordan blocks of the
linear operator $A\mapsto UA$. For example, $U=x$ gives
$\A_{M,N}/U\A_{M,N}\cong\C[y]/(y^N)$, so exactly $N$ probes
are needed. For $U=x+y$, substitution of $y=-x$ gives
\[
 \A_{M,N}/U\A_{M,N}
 \cong\C[x]/(x^M,x^N),
 \qquad \mu(x+y)=\min(M,N).
\]
These formulas describe a finite reconstruction problem. To prove
infinite codimension in the Hardy space, it is enough to count the
vectors contributed by a fixed number of probes.

For a function holomorphic on $\D^2$, define its rectangular Taylor
truncation by
\[
 \pi_{M,N}\left(\sum_{i,j\geq0}a_{ij}z_1^iz_2^j\right)
       =\sum_{i<M,j<N}a_{ij}x^iy^j\in\A_{M,N}.
\]
Truncation respects products of holomorphic functions. On $H^2$,
it is the orthogonal projection onto the corresponding polynomial
space, when $\A_{M,N}$ has the coefficient norm. These two
properties allow the finite-dimensional calculation to detect
vectors orthogonal to the entire orbit.

\begin{theorem}\label{thm:finite-section}
Let $\varphi$ be holomorphic on $\D^2$, let
$F=(f_1,\ldots,f_r)\in\mathcal D_\infty(\varphi)^r$, and set
$\M=\overline{\Span}\{\varphi^nf_j:n\geq0,\ 1\leq j\leq r\}$.
Put $\M_{M,N}=\pi_{M,N}(\M)$ and
$U=\pi_{M,N}\varphi-\varphi(0)1$. Then
\begin{equation}\label{eq:section-bound}
 \codim_{\A_{M,N}}\M_{M,N}
 \geq MN-r\nu(U)\geq MN-r(M+N-1).
\end{equation}
In particular, $\codim\M=\infty$.
\end{theorem}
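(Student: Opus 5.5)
The plan is to push the orbit into the finite algebra $\A_{M,N}$ through the truncation $\pi_{M,N}$, and then count dimensions there using the nilpotence \eqref{eq:nilpotence}.

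\emph{Step 1: truncation intertwines multiplication.} Since truncation respects products of holomorphic functions, $\pi_{M,N}(\varphi^n f_j)=T^n B_j$, where $T=\pi_{M,N}\varphi=\varphi(0)1+U$ and $B_j=\pi_{M,N}f_j$. This holds even for unbounded $\varphi$. The coefficient of $z_1^iz_2^j$ in a product $gh$ with $i<M$, $j<N$ involves only coefficients of $g$ and $h$ in the same rectangle, and Taylor coefficients of holomorphic functions on $\D^2$ are well defined. Hence
\[
\pi_{M,N}\bigl(\Span\{\varphi^nf_j\}\bigr)=\Span\{T^nB_j:n\ge0,\ 1\le j\le r\}=\mathcal S .
\]

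\emph{Step 2: pass to the closure.} On $H^2$, $\pi_{M,N}$ is the orthogonal projection onto a finite-dimensional space, so it is continuous. Therefore $\pi_{M,N}(\M)$ lies in the closure of $\mathcal S$, and $\mathcal S$ is closed because it is finite-dimensional. So $\M_{M,N}\subseteq\mathcal S$; in fact equality holds, but the inclusion is all we need. Here the closure is taken in $H^2$, and the orbit vectors lie in $H^2$ because $f_j\in\mathcal D_\infty(\varphi)$.

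\emph{Step 3: count.} As observed before Proposition~\ref{prop:probes}, each orbit $\{T^nB_j\}$ is spanned by $B_j,UB_j,\ldots,U^{\nu(U)-1}B_j$. Hence $\dim\mathcal S\le r\nu(U)$, and
\[
\codim_{\A_{M,N}}\M_{M,N}\ \ge\ MN-r\nu(U).
\]
By \eqref{eq:nilpotence}, $\nu(U)\le M+N-1$, which gives the second inequality in \eqref{eq:section-bound}.

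\emph{Step 4: infinite codimension.} Let $\mathcal P_{M,N}$ denote the polynomial subspace of $H^2$ corresponding to $\A_{M,N}$, and write $\pi=\pi_{M,N}$. Choose nonzero vectors in $\mathcal P_{M,N}\ominus\M_{M,N}$; the inner product on $\A_{M,N}$ is the coefficient norm, so this orthocomplement makes sense. Any such vector $h$ is orthogonal to $\M$ in $H^2$, because $\inner{g}{h}=\inner{\pi g}{h}=0$ for every $g\in\M$. Therefore
\[
\codim\M\ \ge\ MN-r(M+N-1).
\]
Taking $M=N\to\infty$ makes this bound $N^2-r(2N-1)$, which tends to infinity.

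The only delicate point is Step 1 for unbounded $\varphi$, namely that truncation of $\varphi^n f_j$ equals the Jury power applied to the truncation of $f_j$. This reduces to the lower-triangular (causal) nature of coefficient convolution, which needs only the holomorphy of $\varphi$ and $f_j$ on $\D^2$. I expect this step to be the main thing to check carefully; the rest is routine.
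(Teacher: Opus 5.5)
Your proposal is correct and follows the paper's proof essentially step for step: multiplicativity of $\pi_{M,N}$ gives $\pi_{M,N}(\varphi^nf_j)=T^nB_j$, continuity together with finite-dimensionality places $\M_{M,N}$ in the span of $U^kB_j$ for $0\leq k<\nu(U)$, and the orthogonal-projection argument puts $\mathcal P_{M,N}\ominus\M_{M,N}$ inside $\M^\perp$. You are right that the inclusion $\M_{M,N}\subseteq\mathcal S$ suffices for the bound; the paper states the equality, but the argument is otherwise the same.
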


\begin{proof}
Write $T=\pi_{M,N}\varphi$ and $B_j=\pi_{M,N}f_j$.
Multiplicativity gives $\pi_{M,N}(\varphi^nf_j)=T^nB_j$.
The span of these truncated vectors is finite-dimensional and hence
closed. Continuity of $\pi_{M,N}$ therefore gives
\[
 \begin{aligned}
 \M_{M,N}
 &=\Span\{T^nB_j:n\geq0,\ 1\leq j\leq r\}\\
 &=\Span\{U^kB_j:0\leq k<\nu(U),\ 1\leq j\leq r\}.
 \end{aligned}
\]
There are at most $r\nu(U)$ vectors in the last family, which proves
\eqref{eq:section-bound}.

Identify $\A_{M,N}$ with
$\Hh_{M,N}=\Span\{z_1^iz_2^j:i<M,\ j<N\}\subset H^2(\D^2)$.
For $h\in\Hh_{M,N}\ominus\M_{M,N}$ and $f\in\M$,
$\inner{f}{h}=\inner{\pi_{M,N}f}{h}=0$.
Thus $\Hh_{M,N}\ominus\M_{M,N}\subseteq\M^\perp$ and
\[
 \dim\M^\perp\geq n^2-r(2n-1)
 \qquad(M=N=n).
\]
The right side tends to infinity.
\end{proof}

\begin{example}\label{ex:sharp-bound}
Take $\varphi(z_1,z_2)=z_1+z_2$ and $f_1=1$.
In $\A_{M,N}$ the powers $(x+y)^k$, $0\leq k\leq M+N-2$,
are nonzero: for every such $k$, some term $x^iy^{k-i}$ survives
the truncation. They are independent because their total degrees
are different. Consequently,
\[
 \dim\M_{M,N}=M+N-1,
 \qquad \codim\M_{M,N}=(M-1)(N-1).
\]
The estimate in \eqref{eq:section-bound} is therefore sharp for a
single orbit. On square sections its relative codimension tends
to one, although the full orbit contains infinitely many independent
functions.
\end{example}

The same counting argument applies to joint orbits, including the
unbounded symbols in Corollary~\ref{cor:unbounded}. For positive
integers $N_1,\ldots,N_d$, put
\[
 \A_{\mathbf N}=\C[x_1,\ldots,x_d]/(x_1^{N_1},\ldots,x_d^{N_d}),
 \qquad L_{\mathbf N}=1+\sum_{\ell=1}^d(N_\ell-1),
\]
and let $\pi_{\mathbf N}$ denote the corresponding Taylor truncation.
Every product of $L_{\mathbf N}$ elements with zero constant
coefficient vanishes in this algebra.

\begin{proposition}\label{prop:joint-sections}
For $\Phi,F$ as in Corollary~\ref{cor:unbounded}, the space
$\M_{\mathbf N}=\pi_{\mathbf N}(\M_\Phi(F))$ satisfies
\begin{equation}\label{eq:joint-bound}
 \codim_{\A_{\mathbf N}}\M_{\mathbf N}
 \geq \prod_{\ell=1}^dN_\ell
       -r\binom{L_{\mathbf N}+q-1}{q}.
\end{equation}
\end{proposition}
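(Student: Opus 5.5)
We will follow the proof of Theorem~\ref{thm:finite-section} and replace the single nilpotent $U$ by a commuting $q$-tuple. Write $T_\ell=\pi_{\mathbf N}\varphi_\ell\in\A_{\mathbf N}$ and $B_j=\pi_{\mathbf N}f_j$. Split $T_\ell=c_\ell 1+U_\ell$, where $c_\ell=\varphi_\ell(0)$ and $U_\ell$ has zero constant coefficient.

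Truncation respects products of holomorphic functions. Since each $f_j\in\mathcal D_\infty(\Phi)$, every $\Phi^\alpha f_j$ lies in $H^2$, and
\[
 \pi_{\mathbf N}(\Phi^\alpha f_j)=T^\alpha B_j .
\]
On $H^2$, $\pi_{\mathbf N}$ is the orthogonal projection onto a finite-dimensional polynomial space, so it is continuous. The span of the vectors $T^\alpha B_j$ is finite-dimensional and therefore closed. Hence
\[
 \M_{\mathbf N}=\Span\{T^\alpha B_j:\alpha\in\Nzero^q,\ 1\le j\le r\}.
\]

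Next we pass from the $T_\ell$ to the $U_\ell$. Expanding $T^\alpha=\prod_\ell(c_\ell+U_\ell)^{\alpha_\ell}$ by the binomial theorem shows that each $T^\alpha$ is a linear combination of monomials $U^\beta$ with $\beta\le\alpha$. Conversely, $U_\ell=T_\ell-c_\ell 1$, so the two families have the same span. Thus
\[
 \M_{\mathbf N}=\Span\{U^\beta B_j:\beta\in\Nzero^q,\ 1\le j\le r\}.
\]

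The key step is the nilpotence bound. If $|\beta|\ge L_{\mathbf N}$, then $U^\beta$ is a product of at least $L_{\mathbf N}$ elements with zero constant coefficient. Every monomial of such a product has total degree at least $L_{\mathbf N}=1+\sum_\ell(N_\ell-1)$. By the pigeonhole principle such a monomial has some exponent at least $N_\ell$, so it vanishes in $\A_{\mathbf N}$ and $U^\beta=0$. Therefore only the indices with $|\beta|\le L_{\mathbf N}-1$ contribute.

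The number of $\beta\in\Nzero^q$ with $|\beta|\le L-1$ is $\binom{L-1+q}{q}$, by the usual stars-and-bars count. This equals the binomial coefficient in \eqref{eq:joint-bound}. So $\M_{\mathbf N}$ is spanned by at most $r\binom{L_{\mathbf N}+q-1}{q}$ vectors. Since $\dim\A_{\mathbf N}=\prod_\ell N_\ell$, subtracting gives \eqref{eq:joint-bound}.

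The main point needing care is the reduction from $T$ to $U$ together with the vanishing of $U^\beta$ for long products. One might want a sharper count, for instance by bounding with the nilpotency indices $\nu(U_\ell)$ separately. The stated bound, however, needs only the uniform degree argument, so I would not pursue a sharper count. As a consistency check, take $q=1$ and $d=2$: the bound becomes $MN-r(M+N-1)$, recovering Theorem~\ref{thm:finite-section}. For cubes with $N_\ell=n$, the count is $O(n^q)$ against $n^d$, which is the growth claimed in the introduction.
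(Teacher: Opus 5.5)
Your proof is correct and follows the paper's argument. You truncate, replace each $\pi_{\mathbf N}\varphi_\ell$ by its constant-free part $U_\ell$ without changing the joint orbit span, and note that $U^\beta=0$ once $\abs{\beta}\geq L_{\mathbf N}$, which leaves $\binom{L_{\mathbf N}+q-1}{q}$ multi-indices per generator; your pigeonhole and stars-and-bars details fill in steps the paper leaves implicit.
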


\begin{proof}
Set $U_\ell=\pi_{\mathbf N}\varphi_\ell-\varphi_\ell(0)1$ for
$1\leq\ell\leq q$, and $B_j=\pi_{\mathbf N}f_j$.
Subtracting the constant coefficients preserves the joint polynomial
orbit span. As in Theorem~\ref{thm:finite-section}, this identifies
$\M_{\mathbf N}$ with the span of $U^\alpha B_j$.
Only $\abs{\alpha}<L_{\mathbf N}$ can contribute. There are
$\binom{L_{\mathbf N}+q-1}{q}$ such multi-indices, which proves
\eqref{eq:joint-bound}.
\end{proof}

For $N_1=\cdots=N_d=n$, the ambient dimension is $n^d$, whereas
the upper bound for the orbit dimension is
$r\binom{d(n-1)+q}{q}=O(n^q)$. If $q<d$, the codimension tends
to infinity and the relative codimension tends to one. The
orthogonal-projection argument in
Theorem~\ref{thm:finite-section} now gives a second proof of
Theorem~\ref{thm:main} and Corollary~\ref{cor:unbounded}.
In this form the two arguments express the
same dimension deficit: positive-dimensional fibers in the first
proof, and too few surviving monomials in the second.

\section{Interpolation and Hardy-space models}\label{sec:kernels}

Interpolation provides a useful comparison with the preceding
obstruction. Given distinct points $Z=(a_j)\subset\D^d$, write
$\kappa_{a_j}=K_{a_j}/\norm{K_{a_j}}$.
The normalized kernels form a \emph{Riesz sequence}
\cite[Chapter~3]{Christensen} if there are
$0<A\leq B<\infty$ such that
\begin{equation}\label{eq:riesz}
 A\sum_j\abs{c_j}^2
 \leq\norm{\sum_jc_j\kappa_{a_j}}^2
 \leq B\sum_j\abs{c_j}^2
\end{equation}
for every finitely supported scalar sequence $c$.
A Riesz sequence is a Riesz basis for its own closed span; it need
not span the ambient Hardy space. In fact, point kernels cannot
form a Riesz basis for the full Hardy space in any dimension.

The following elementary fact explains why. A sequence is
\emph{minimal} if no member belongs to the closed span of the others;
every Riesz sequence is minimal.

\begin{proposition}\label{prop:minimal-kernels}
Let $d\geq1$. If $(K_{a_j})$ is minimal in $H^2(\D^d)$, then its
closed span has infinite codimension. The same conclusion holds
whenever $Z$ is $H^\infty(\D^d)$-interpolating.
\end{proposition}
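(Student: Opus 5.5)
The plan is to describe the orthogonal complement of the closed span explicitly and find an infinite-dimensional subspace inside it. By the reproducing property,
\[
 \left(\overline{\Span}\{K_{a_j}\}\right)^\perp
 =\{h\in H^2(\D^d):h(a_j)=0\text{ for all }j\}.
\]
So it suffices to produce infinitely many linearly independent $H^2$ functions that vanish on all of $Z$. We may assume $Z$ is nonempty; if $Z$ is empty, the span is $\{0\}$ and there is nothing to prove.

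First, we use minimality to obtain a single nonzero function vanishing on $Z$ except at one point. Fix the index $1$. Since $K_{a_1}$ does not lie in the closed span of the remaining kernels, the Hahn--Banach theorem (or orthogonal projection) gives $g\in H^2(\D^d)$ with $\inner{g}{K_{a_j}}=0$ for $j\neq1$ and $\inner{g}{K_{a_1}}\neq0$. Equivalently, $g(a_j)=0$ for $j\neq1$ and $g(a_1)\neq0$; in particular $g\not\equiv0$.

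Next, we multiply $g$ by polynomials to kill the one remaining value. Let $\mathcal P_1$ be the space of polynomials $p$ on $\C^d$ with $p(a_1)=0$. This space is infinite-dimensional; for example, it contains $(z_1-a_{1,1})z^\alpha$ for all $\alpha\in\Nzero^d$. Polynomials are bounded multipliers, so $pg\in H^2(\D^d)$, and $pg$ vanishes at every point of $Z$. Hence $pg$ lies in the orthogonal complement above.

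The key remaining point is that the linear map $p\mapsto pg$ is injective on $\mathcal P_1$. Suppose $pg\equiv0$ on $\D^d$. The ring of holomorphic functions on the connected domain $\D^d$ is an integral domain, by the identity theorem. Since $g\not\equiv0$, it follows that $p\equiv0$. Therefore $\{pg:p\in\mathcal P_1\}$ is an infinite-dimensional subspace of the complement, and the closed span of the kernels has infinite codimension. This argument works for every $d\geq1$ and makes no use of the size of $Z$ or of frame bounds.

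For the second assertion, suppose $Z$ is $H^\infty(\D^d)$-interpolating. Then there is $\psi\in H^\infty(\D^d)\subset H^2(\D^d)$ with $\psi(a_1)=1$ and $\psi(a_j)=0$ for $j\neq1$. This $\psi$ can replace $g$ directly in the argument above. Alternatively, pairing any relation among the kernels with $\psi$ shows that the sequence $(K_{a_j})$ is minimal, so the first part applies.

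I do not expect a real obstacle. The only step needing care is the translation between minimality and the existence of the biorthogonal function $g$, which is standard Hilbert-space geometry.
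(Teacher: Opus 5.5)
Your proof is correct and follows essentially the same route as the paper. Minimality gives $g$ with $g(a_1)\neq0$ and $g(a_j)=0$ for $j\neq1$; multiplying by polynomials that vanish at $a_1$ (the paper uses only $z_1^n(z_1-a_{1,1})$) gives infinitely many independent functions in the orthogonal complement, by the same identity-theorem argument, and in the $H^\infty$-interpolating case the interpolant itself serves as $g$.
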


\begin{proof}
Minimality gives $F\in H^2(\D^d)$ with $F(a_1)\ne0$ and
$F(a_j)=0$ for $j\ne1$, by orthogonal separation of $K_{a_1}$
from the closed span of the remaining kernels. Define
\[
 h(z)=(z_1-a_{1,1})F(z).
\]
This is a nonzero function in $H^2$ that vanishes on all of $Z$.
Each $z_1^nh$, $n\geq0$, also belongs to $H^2$ and vanishes on $Z$,
so it is orthogonal to every $K_{a_j}$. These functions are linearly
independent: a relation would make a polynomial in $z_1$ vanish
on the open set where $h\ne0$, and hence identically.

If $Z$ is $H^\infty$-interpolating, bounded data on $Z$ have bounded
holomorphic interpolants. In particular, one can choose $F$ above
with $F(a_1)=1$ and $F(a_j)=0$ for $j\ne1$, and repeat the argument.
\end{proof}

Thus kernel incompleteness is already present in one variable and
has its own elementary explanation. Interpolation nevertheless
produces useful frames on the associated sequence space. We give
the details to make the ambient spaces explicit.

First, $H^\infty$-interpolation implies that the normalized kernels
are a Riesz sequence. Indeed, the open mapping theorem gives a
constant $C$ such that every sequence of signs $\varepsilon_j=\pm1$
has an interpolant $\psi_\varepsilon$ with
$\norm{\psi_\varepsilon}_\infty\leq C$. For finite sums
$x=\sum_jc_j\kappa_{a_j}$, multiplication by
$M_{\psi_\varepsilon}^*$ changes $x$ into
$x_\varepsilon=\sum_j\varepsilon_jc_j\kappa_{a_j}$ and changes
$x_\varepsilon$ back into $x$. Hence
$C^{-1}\norm{x}\leq\norm{x_\varepsilon}\leq C\norm{x}$.
Averaging the squared norms over independent signs gives
\eqref{eq:riesz} with $A=C^{-2}$ and $B=C^2$.
For the bidisk, the more comprehensive interpolation theorem of
Agler and McCarthy \cite{AglerMcCarthy} uses uniform Gram-matrix
estimates for all admissible kernels. Here we need only the ordinary
Szeg\H{o} kernel.

Put
\[
 w_j=\norm{K_{a_j}}^{-1}
     =\prod_{\ell=1}^d(1-\abs{a_{j,\ell}}^2)^{1/2},
 \qquad g=(w_j)_j,
 \qquad D_\ell e_j=a_{j,\ell}e_j
\]
on $\ell^2$, where $(e_j)$ is its standard basis. The following
equivalence is the multivariable version of the usual
kernel-to-orbit interpretation of Carleson frames; compare
\cite{AldroubiCabrelliCakmakMolterPetrosyan,KrishtalMillerBDC}.

\begin{proposition}\label{prop:multi-orbit}
The normalized kernels $(\kappa_{a_j})$ form a Riesz sequence in
$H^2(\D^d)$ if and only if $g\in\ell^2$ and
$\{D_1^{\alpha_1}\cdots D_d^{\alpha_d}g:
\alpha\in\Nzero^d\}$ is a frame for $\ell^2$. The same bounds
work in the Riesz inequalities and the frame inequalities.
\end{proposition}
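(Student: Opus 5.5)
The plan is to identify a synthesis operator whose adjoint is the analysis operator of the orbit family, so that the Riesz inequalities for the kernels and the frame inequalities for the orbit become the same norm estimate read in two ways.

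\emph{Step 1: the bridge identity.} Given $g\in\ell^2$, define the synthesis map on finitely supported $c\in\ell^2$ by
\[
 Sc=\sum_j c_j\kappa_{a_j}=\sum_j c_jw_jK_{a_j}.
\]
Expand $K_{a_j}(z)=\sum_\alpha \overline{a_j^{\alpha}}z^\alpha$, where $a_j^\alpha=\prod_\ell a_{j,\ell}^{\alpha_\ell}$. Then the $\alpha$-th Taylor coefficient of $Sc$ is
\[
 \sum_j c_jw_j\overline{a_j^\alpha}=\inner{c}{D^\alpha g},
 \qquad D^\alpha g=(a_j^\alpha w_j)_j .
\]
Since the monomials form an orthonormal basis, this gives
\[
 \norm{Sc}^2=\sum_{\alpha\in\Nzero^d}\abs{\inner{c}{D^\alpha g}}^2 .
\]
The computation is legitimate for finite sums: each kernel has square-summable coefficients, and the coefficients of a finite combination are the corresponding finite combinations. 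Moreover $D^\alpha g\in\ell^2$, because $\abs{a_j^\alpha}<1$ and $g\in\ell^2$.

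\emph{Step 2: both directions.} Suppose $g\in\ell^2$. By the identity, the Riesz inequalities \eqref{eq:riesz} on finitely supported $c$ are exactly the frame inequalities for $\{D^\alpha g\}$ on that dense subspace. The two-sided bound then extends to all $c\in\ell^2$ by density, since:
\begin{itemize}
 \item the upper bound on a dense set makes the analysis map bounded, hence continuous;
 \item the lower bound then follows by passing to limits.
\end{itemize}
This also shows that the constants $A,B$ are identical in both formulations. Conversely, the frame inequalities restricted to finitely supported $c$ give \eqref{eq:riesz} with the same bounds.

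\emph{Step 3: the obstacle.} The main difficulty is the direction Riesz $\Rightarrow$ $g\in\ell^2$. Without it the vectors $D^\alpha g$ need not lie in $\ell^2$, and the frame statement is meaningless. I would take the case $\alpha=0$ of the identity: for finitely supported $c$, the constant coefficient of $Sc$ equals $\sum_j c_jw_j$. Hence
\[
 \abs{\sum_j c_jw_j}\leq\norm{Sc}\leq B^{1/2}\norm{c}
\]
by the Riesz upper bound. Duality over finitely supported $c$ then gives $\sum_j w_j^2\leq B$, so $g\in\ell^2$. This does not use the identity circularly: the $\alpha=0$ coefficient computation needs only finite sums and no prior membership of $g$ in $\ell^2$. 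With $g\in\ell^2$ established, Step 2 completes the equivalence.
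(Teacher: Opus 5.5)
Your proposal is correct and follows the paper's proof essentially step for step. Both use the same monomial-coefficient identity $\norm{\sum_j c_j\kappa_{a_j}}^2=\sum_\alpha\abs{\inner{c}{D^\alpha g}}^2$, use its $\alpha=0$ term with the upper Riesz bound to force $g\in\ell^2$, extend the frame inequalities from finitely supported sequences by continuity, and read the converse off the same identity. (One trivial slip: the correct bound is $\abs{a_j^\alpha}\leq 1$, not $<1$, since equality holds at $\alpha=0$; this does not affect the argument.)
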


\begin{proof}
Expansion of the kernels in the monomial basis gives, for every
finitely supported $c$,
\begin{equation}\label{eq:coefficient-identity}
 \norm{\sum_jc_j\kappa_{a_j}}^2
 =\sum_{\alpha\in\Nzero^d}
       \left|\sum_jc_jw_j\overline{a_j^\alpha}\right|^2.
\end{equation}
If the kernels have an upper Riesz bound $B$, the term $\alpha=0$
shows that $|\sum_jc_jw_j|^2\leq B\norm{c}_{\ell^2}^2$.
Therefore $g\in\ell^2$, and every $D^\alpha g$ belongs to $\ell^2$.
The right side of \eqref{eq:coefficient-identity} is now
$\sum_\alpha|\inner{c}{D^\alpha g}|^2$. Its upper bound defines
a bounded analysis operator on the dense subspace of finitely
supported sequences, and thus on all of $\ell^2$. The lower bound
extends by continuity as well. This proves the frame inequalities.
Conversely, if $g\in\ell^2$ and the orbit is a frame, the same
identity immediately gives both Riesz bounds.
\end{proof}

Equivalently, under these conditions the normalized evaluation map
$E_Zf=(w_jf(a_j))_j$ is bounded and onto $\ell^2$.
It sends the monomial basis to the joint diagonal orbit and has
$E_Z^*e_j=\kappa_{a_j}$. Proposition~\ref{prop:minimal-kernels}
shows that its kernel is infinite-dimensional. Thus the frame
describes the interpolation data, while the Hardy space contains
many additional functions invisible to those data.

\begin{example}\label{ex:embedded-carleson}
Let $(\lambda_j)\subset\D$ be a one-variable
$H^\infty$-interpolating sequence, also called a Carleson sequence,
and put $a_j=(\lambda_j,0,\ldots,0)$. Interpolants depending only
on $z_1$ show that $Z=(a_j)$ is $H^\infty(\D^d)$-interpolating.
Here $g_j=\sqrt{1-\abs{\lambda_j}^2}$ and $D_\ell=0$ for
$\ell\geq2$, so Proposition~\ref{prop:multi-orbit} says that
$\{D_1^ng:n\geq0\}$ alone is a frame for $\ell^2$.
This is compatible with Theorem~\ref{thm:main}: the sharp lower
bound of $d$ multiplier parameters concerns generation of the
\emph{full} space $H^2(\D^d)$ by finitely many initial functions.
\end{example}

The role of the ambient space is also visible in the model-space
representation of dynamical frames. Christensen, Hasannasab, and
Philipp \cite[Theorem~3.4]{ChristensenHasannasabPhilipp} obtained a scalar
compressed-shift representation, and Cabrelli, Molter, and Su\'arez
\cite[Theorem~2.1]{CabrelliMolterSuarez} established its vector-valued
form. We recall the construction for finitely many generators.
If \eqref{eq:orbit} is a frame, its synthesis operator
\[
 Q:H^2(\D;\C^r)\longrightarrow\Hh,
 \qquad Q(z^ne_j)=T^nf_j,
\]
is bounded and onto. Here $e_j$ are the constant coordinate vectors,
and $S$ denotes multiplication by $z$. The identity $QS=TQ$ makes
$\ker Q$ invariant under $S$. Thus
$\mathcal K=(\ker Q)^\perp$ is a \emph{model space}, meaning a
closed subspace invariant under $S^*$, and $V=Q|_{\mathcal K}$
is boundedly invertible. With $P_{\mathcal K}$ the orthogonal
projection and $S_{\mathcal K}=P_{\mathcal K}S|_{\mathcal K}$,
\begin{equation}\label{eq:frame-model}
 T=VS_{\mathcal K}V^{-1},\qquad
 f_j=VP_{\mathcal K}e_j,\qquad
 S_{\mathcal K}^nP_{\mathcal K}e_j=P_{\mathcal K}(z^ne_j).
\end{equation}
The projected monomials form a Parseval frame for $\mathcal K$,
that is, a frame with both bounds equal to one.
Conversely, an invertible image of this frame gives a frame of
iterations. Aguilera, Cabrelli, Negreira, and Paternostro
\cite{AguileraCabrelliNegreiraPaternostro} use such models to
determine the minimum number of frame generators.

Scalar examples are
$K_\theta=H^2(\D)\ominus\theta H^2(\D)$ for an inner function
$\theta$, that is, a bounded analytic function with unimodular
boundary values almost everywhere. The compression to $K_\theta$ has the Parseval orbit
$\{P_{K_\theta}z^n:n\geq0\}$ of $P_{K_\theta}1$.
The full one-variable Hardy space is another model space.
These frames live on compressed spaces. By Theorem~\ref{thm:main},
a bounded scalar multiplier on the full $H^2(\D^d)$, $d\geq2$,
cannot be similar to any such model with finite coefficient multiplicity.

Krishtal, Mashreghi, and Miller \cite{KrishtalMashreghiMillerSectorial}
use the representation 
\eqref{eq:frame-model} for invertible
operators with spectrum in a sector about the positive real axis,
of opening less than $2\pi$, inside the unit disk.
The principal logarithm defines $T^s$.
For nonnegative sampling times with uniformly bounded counts in
unit intervals, they relate synthesis to projected vector-valued
exponentials on a suitable arc, up to an invertible transformation
and a compact perturbation. That setting begins with a frame of integer iterates on
its model space; the present obstruction identifies full polydisk
multiplication operators for which no such finite family exists.

\section*{Acknowledgements}

Ilya Krishtal was supported in part by the Fulbright Global Scholar Award.
Javad Mashreghi was supported by the Canada Research Chairs Program
(CRC-2022-00097) and the Natural Sciences and Engineering Research Council
of Canada (Discovery Grant RGPIN-2024-04232).

ChatGPT (OpenAI) was used to assist with drafting and revising the
exposition, searching the literature, and checking selected arguments.
The authors are responsible for all parts of the manuscript.

\end{document}